\newtheorem{theorem}{Theorem}[section]
\newtheorem{lemma}[theorem]{Lemma}
\newtheorem{proposition}[theorem]{Proposition}
\newtheorem{corollary}[theorem]{Corollary}
\theoremstyle{definition}
\newtheorem{example}[theorem]{Example}
\newtheorem{definition}[theorem]{Definition}
\title{\textbf{The problem of deciding the positivity of Kronecker coefficients and Saxl conjecture}}
\author{Mahdi Ebrahimi\footnote{ m.ebrahimi.math@ipm.ir\\ School of Mathematics, Institute for Research in Fundamental Sciences (IPM) P.O. Box: 19395--5746, Tehran, Iran}
\\ Declarations of interest: none
\\
}
\date{}
\begin{document}

\maketitle


\begin{abstract}
Given a positive integer $k$, let $n:=\binom{k+1}{2}$.
 In 2012, during a talk at UCLA, Jan Saxl conjectured that all irreducible representations of the symmetric group
  $\mathfrak{S}_n$ occur in the decomposition of the tensor square of the irreducible representation corresponding to the staircase partition.
 In this paper, we investigate two useful methods to obtain some irreducible representations that occur in this decomposition. Our main tools are the semi-group property for Kronecker coefficients and generalized blocks of symmetric groups.

  \end{abstract}
\noindent {\bf{Keywords:}}  Saxl conjecture, Generalized $t$-block, semi-group property, Symmetric group. \\
\noindent {\bf AMS Subject Classification Number:}  05E10, 20C30, 05A17.

\section{Introduction}
A longstanding open problem in algebraic combinatorics, known as the \textit{Kronecker problem}, is to find a combinatorial formula for Kronecker coefficients \cite{St}.
Given a partition $\lambda$ of a positive integer $n$, let $[\lambda]$ be the associated irreducible complex character of the symmetric group $\mathfrak{S}_n$.
For partitions $\alpha, \beta,\nu\vdash n$, the \textit{Kronecker coefficient} $g(\alpha, \beta, \nu)$ is the coefficient that occurs in the (Kronecker) product $[\alpha][\beta]=\sum_{\nu}g(\alpha,\beta,\nu)[\nu]$.
They may be computed via the scalar product, 
 $$\langle [\alpha][\beta],[\nu]\rangle=\frac{1}{n!}\sum_{\pi\in \mathfrak{S}_n}[\alpha](\pi)[\beta](\pi)[\nu](\pi),$$
 from which it also shows that the Kronecker coefficients are symmetric in $\alpha,\beta,\nu$. 

 These coefficients have been described as 'perhaps the most challenging' deep and secretive objects in algebraic combinatorics \cite{PP}.
 Since general satisfactory explicit formulas or combinatorial descriptions for Kronecker coefficients seem very hard to find, it is desirable to look for some kind of global behavior of these coefficients.
 For example, see \cite{Bs,Bu,Dv,Pa}
 
The problem of deciding the positivity of Kronecker coefficients born in quantum information theory \cite{CD,CH,CM}. Related to Kronecker positivity problem, there is a fundamental conjecture posed by Heide, Saxl, Tiep and Zalesskii \cite{HS}. It states that for every positive integer $n\neq 2,4,9$ there is a complex irreducible character of the symmetric group $\mathfrak{S}_n$ whose square contains all irreducible characters of $\mathfrak{S}_n$ as constituents. When $n$ is a triangular number, a candidate for this irreducible character of $\mathfrak{S}_n$ was suggested by Saxl in 2012:
For $k\geq 1$, the staircase partition $\rho_k$ of $n:=\binom{k+1}{2}$ is $\rho_k=(k, k-1,\dots,1)$. Saxl's Conjecture says that the Kronecker square $[\rho_k]^2$ contains all irreducible characters of $\mathfrak{S}_n$ as constituents.
This conjecture has inspired a lot of recent research. For instance, we refer the reader to \cite{BO,Ik,Pa}.
 We start with the following result which is a partially answer to the problem of deciding the positivity of Kronecker coefficients.

\begin{proposition}\label{block}
Given partitions $\xi ,\beta,\alpha \vdash n$, let $g(\xi,\beta,\alpha)\neq 0$. If $\xi$ is a $t$-core partition, then either 
\begin{itemize}
		\item [a)] $\alpha$ is a $t$-core partition, or 
		\item [b)] there exists a partition $\nu\neq\alpha$ of $n$ such that $\alpha$ and $\nu$ have the same $t$-core and $g(\xi,\beta,\nu)\neq 0$.
	\end{itemize}
\end{proposition}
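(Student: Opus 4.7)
The plan is to combine the Murnaghan--Nakayama rule with the defining property of $t$-cores. Since $\xi$ is a $t$-core, peeling off a $t$-cycle in Murnaghan--Nakayama would require a removable $t$-rim hook of $\xi$, of which there are none; hence $[\xi](\pi) = 0$ whenever the cycle type of $\pi$ contains a part equal to $t$. Multiplying by $[\beta](\pi)$ and using the Kronecker decomposition, this gives $\sum_{\lambda \vdash n} g(\xi,\beta,\lambda)\,[\lambda](\pi) = 0$ for every such $\pi$.

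Next, I would expand each $[\lambda](\pi)$ via Murnaghan--Nakayama applied to the distinguished $t$-cycle of $\pi$. Writing $\widetilde\pi$ for the cycle type obtained from $\pi$ by deleting that $t$-cycle, this rearranges to
$$
0 \;=\; \sum_{\rho \vdash n-t} \Bigl(\; \sum_{\lambda :\, \lambda/\rho \text{ is a } t\text{-rim hook}} g(\xi,\beta,\lambda)\,(-1)^{\mathrm{ht}(\lambda/\rho)}\;\Bigr)\,[\rho](\widetilde\pi).
$$
Since $\widetilde\pi$ can be taken to be an arbitrary cycle type of $\mathfrak{S}_{n-t}$ and the irreducible characters of $\mathfrak{S}_{n-t}$ are linearly independent as class functions, this collapses to
$$
\sum_{\lambda :\, \lambda/\rho \text{ is a } t\text{-rim hook}} g(\xi,\beta,\lambda)\,(-1)^{\mathrm{ht}(\lambda/\rho)} \;=\; 0 \qquad (\star)
$$
for every $\rho \vdash n-t$. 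Establishing $(\star)$ cleanly is the core technical step.

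With $(\star)$ available, the conclusion is short. Suppose $\alpha$ is not a $t$-core; then $\alpha$ admits a removable $t$-rim hook, producing a partition $\rho_0 \vdash n-t$. Applying $(\star)$ with $\rho = \rho_0$, the summand indexed by $\lambda = \alpha$ equals $\pm g(\xi,\beta,\alpha)$, which is nonzero by hypothesis. The vanishing of the entire sum therefore forces at least one other index $\nu \neq \alpha$ in that sum to satisfy $g(\xi,\beta,\nu) \neq 0$. But $\alpha$ and $\nu$ both reduce to $\rho_0$ by removing a single $t$-rim hook, so they share the same $t$-core, which is alternative (b). The only place that calls for care is the sign bookkeeping: it is essential that the nonzero term $\pm g(\xi,\beta,\alpha)$ cannot be killed without the help of another summand, and this is automatic because distinct partitions $\lambda$ occupy distinct indices in $(\star)$.
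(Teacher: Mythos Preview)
Your argument is correct. The derivation of $(\star)$ is clean: the only point worth making explicit is that any cycle type of $\mathfrak{S}_{n-t}$ arises as $\widetilde\pi$ for some $\pi$ whose cycle type contains a part $t$, which is immediate, and that each $\lambda$ contributes at most one term to the sum in $(\star)$ once $\rho$ is fixed, so the $\alpha$-term genuinely must be cancelled by a distinct $\nu$. The final step, that $\alpha$ and $\nu$ share the same $t$-core because both arise by adding a $t$-rim hook to $\rho_0$, is standard.

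Your route is genuinely different from, and more elementary than, the paper's. The paper argues via the generalized block theory of K\"ulshammer--Olsson--Robinson: it observes that $[\xi]$ vanishes on all $t$-singular classes (not merely those with a part exactly $t$), so $g(\xi,\beta,\alpha)=\langle[\xi][\beta],[\alpha]\rangle_{\mathfrak{S}_n^{(t')}}$, expands this over the linked $t$-block $B$ of $[\alpha]$, and uses that $\langle[\alpha],[\alpha]\rangle_{\mathfrak{S}_n^{(t')}}\neq 1$ when $\alpha$ is not $t$-core to force a nonzero off-diagonal contribution. That argument relies on the nontrivial identification of linked and combinatorial $t$-blocks. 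Your approach sidesteps this machinery entirely with a single application of Murnaghan--Nakayama, and in fact yields the slightly sharper conclusion that $\nu$ can be chosen to differ from $\alpha$ by moving a single $t$-rim hook (both cover the same $\rho_0$). The paper's approach, on the other hand, situates the result within the block-theoretic framework that motivates it.
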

Blasiak \cite{Bl} gave a positive combinatorial formula for the Kronecker coefficient $g(\lambda, \mu(d),\nu)$ for any partitions $\lambda, \nu$ of $n$ and hook partition $\mu(d):=(n-d,1^d)$. The following result guaranties the existence of an irreducible constituent indexed by a hook partition $\neq (n),(1^n)$ in the Kronecker product $[\alpha]^2$ of a partition $\alpha\neq (n),(1^n)$ of $ n$.
\begin{corollary}\label{hook}
    Let $\alpha\vdash n$. If $\alpha\neq (n),(1^n)$, then there is a hook partition $\beta\neq (n),(1^n)$ of $n$ such that $g(\alpha,\alpha,\beta)\neq 0$.
\end{corollary}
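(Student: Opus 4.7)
My plan is to split on the number of removable corners of $\alpha$. Using the decomposition $[(n-1,1)] + [(n)] = \mathrm{Ind}_{\mathfrak{S}_{n-1}}^{\mathfrak{S}_n}[(n-1)]$ together with Frobenius reciprocity and the branching rule, one obtains
\[
g(\alpha,\alpha,(n-1,1)) \;=\; \langle\mathrm{Res}_{\mathfrak{S}_{n-1}}^{\mathfrak{S}_n}[\alpha],\,\mathrm{Res}_{\mathfrak{S}_{n-1}}^{\mathfrak{S}_n}[\alpha]\rangle - 1 \;=\; c(\alpha) - 1,
\]
where $c(\alpha)$ denotes the number of removable boxes of $\alpha$. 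If $\alpha$ is not a rectangle then $c(\alpha)\geq 2$, so $\beta=(n-1,1)$ is a non-trivial hook in $[\alpha]^2$ and we are done; this already disposes of every $\alpha\neq (n),(1^n)$ that is not of the form $(r^c)$.

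In the remaining case $\alpha=(r^c)$ with $r,c\geq 2$ the displayed formula gives $0$, so I would turn to Proposition~\ref{block}. The key observation is that a rectangle $(r^c)$ has all its hook lengths at most $r+c-1$, hence is a $t$-core for every integer $t\geq r+c$; pick such a $t$, arranging if necessary that $t\nmid n$. Since $g(\alpha,\alpha,(n))=\langle[\alpha],[\alpha]\rangle=1\neq 0$ and the partition $(n)$ has hook length $n\geq t$ (so it is not a $t$-core), Proposition~\ref{block} applied with $\xi=\alpha$ furnishes a partition $\nu\neq(n)$ in the same $t$-block as $(n)$ with $g(\alpha,\alpha,\nu)\neq 0$. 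The strategy would then be to analyse the $t$-block of $(n)$ via the $t$-abacus so that, for a suitable choice of $t$, this block contains a non-trivial hook, and to combine this with Blasiak's positive combinatorial formula for $g(\lambda,\mu(d),\nu)$ to ensure that the partner $\nu$ may be taken to be that hook.

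The principal obstacle is precisely this last step. Proposition~\ref{block} only guarantees \emph{some} second constituent in the $t$-block of $(n)$, not a priori a hook. To isolate a hook I would have to combine a careful choice of $t$ (so that the $t$-block of $(n)$ actually contains a non-trivial hook) with the positivity of Blasiak's combinatorial expression for hook Kronecker coefficients, and, if necessary, iterate Proposition~\ref{block} to move inside the $t$-block toward a hook constituent. A handful of very small rectangles may have to be handled by direct character computation.
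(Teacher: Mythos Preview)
Your treatment of the non-rectangular case via the identity $g(\alpha,\alpha,(n-1,1))=c(\alpha)-1$ is correct and in fact more economical than the paper's route, which instead splits first into hook versus non-hook and then into self-conjugate versus non-self-conjugate.

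The obstacle you flag in the rectangular case disappears with one observation you are just short of: take $t=n$. A partition of $n$ has a hook of length $n$ if and only if it is a hook partition, and every hook partition of $n$ has empty $n$-core; hence the $n$-block containing $(n)$ is \emph{precisely} the set of all hook partitions of $n$. Your rectangle $\alpha=(r^c)$ with $r,c\ge 2$ is not a hook, so it is an $n$-core, and Proposition~\ref{block} applied with $\xi=\beta=\alpha$ and third argument $(n)$ immediately yields a hook $\nu\neq(n)$ with $g(\alpha,\alpha,\nu)\neq 0$; no abacus analysis, no Blasiak formula, no iteration is needed. If moreover $r\neq c$, then $\alpha\neq\alpha'$ forces $g(\alpha,\alpha,(1^n))=0$, so $\nu\neq(1^n)$ as well. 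This is exactly the paper's argument for every non-self-conjugate non-hook $\alpha$; the paper simply never needed your preliminary $(n-1,1)$ reduction because the $t=n$ trick already covers all such $\alpha$, not just rectangles.

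What remains is the square $\alpha=(r^r)$, where $\alpha=\alpha'$ gives $g(\alpha,\alpha,(1^n))=1$, so Proposition~\ref{block} by itself does not exclude $\nu=(1^n)$. Here the paper uses a genuinely different device (for all self-conjugate non-hook $\alpha$): write $\alpha=(1^{\alpha_1})+\alpha^{\ast}$ with $\alpha^{\ast}=(\alpha_2,\dots,\alpha_h)'$, obtain (inductively) a non-trivial hook $\beta^{\ast}\vdash n-\alpha_1$ with $g(\alpha^{\ast},\alpha^{\ast},\beta^{\ast})\neq 0$, and then apply the semigroup property together with $g((1^{\alpha_1}),(1^{\alpha_1}),(\alpha_1))=1$ to get $g(\alpha,\alpha,\beta^{\ast}+(\alpha_1))\neq 0$, where $\beta^{\ast}+(\alpha_1)$ is again a non-trivial hook. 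Your outline would need some replacement for this last case.
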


Let $n$ be a positive integer. We denote by $\sharp_n$ the rectangular partition $(n^n)\vdash n^2$. 
  Suppose that $\alpha=(\alpha_1,\alpha_2,\dots,\alpha_h)\vdash n$. We denote by $\mathrm{Kron}(\alpha)$, the set of all partitions $\beta\vdash n$ satisfying $g(\alpha,\alpha,\beta)\neq 0$. In addition, for every pair $(a,b)$ of non-negative integers, we define $\alpha^{(a,b)}:=(\alpha_1+a, \alpha_2, \dots, \alpha_h, 1^b)\vdash n+a+b$. To formulate the next result, we require the following definitions.

\begin{definition}
	Let $k$ and $m$ be positive integers with $k\geq m$. We define a \textit{$(k,m)$-good pair} to be a pair of non-negative integers $(a,b)$ such that $a+b=\binom{k+1}{2}-\binom{m+1}{2}$, and for some subset $I\subseteq\{m+1,m+2,\dots,k\}$, we have that $a=\sum_{i\in I}i$. We denote by $G(k,m)$, the set of  all $(k,m)$-good pairs.
	\end{definition}

We now recall \textit{semi-group property} for Kronecker coefficients \cite{Ma} which allows us to break partitions down into smaller ones.
\begin{proposition}\label{sum}(semi-group property \cite{Ma}.)
	Let $\alpha,\beta,\gamma\vdash n_1$, and $\lambda,\mu,\nu\vdash n_2$. If both $g(\alpha,\beta,\gamma)>0$ and $g(\lambda,\mu,\nu)>0$ then $g(\lambda+\alpha,\mu+\beta,\nu+\gamma)\geq \mathrm{max}\{g(\lambda,\mu,\nu),g(\alpha,\beta,\gamma)\}$.
\end{proposition}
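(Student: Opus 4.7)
The plan is to realize Kronecker coefficients as dimensions of weight spaces in an algebra of invariants sitting inside a polynomial ring, and exploit the fact that this ring is an integral domain.

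First, I would fix a complex vector space $V$ of dimension $k$ that exceeds the lengths of all partitions in sight and invoke the triple Cauchy decomposition of the symmetric algebra on $V\otimes V\otimes V$ as a $GL(V)^{3}$-module:
$$\mathrm{Sym}^{n}(V\otimes V\otimes V)=\bigoplus_{\sigma,\tau,\rho\,\vdash\, n} g(\sigma,\tau,\rho)\,S^{\sigma}V\otimes S^{\tau}V\otimes S^{\rho}V.$$
Fixing a Borel decomposition $B=TU$ of $GL(V)$ and passing to the subalgebra of $U\times U\times U$-invariants
$$R:=\mathrm{Sym}(V\otimes V\otimes V)^{U\times U\times U},$$
the decomposition above identifies $g(\sigma,\tau,\rho)$ with the dimension of the $(\sigma,\tau,\rho)$-weight space $R_{(\sigma,\tau,\rho)}$ of $R$ under the action of $T\times T\times T$, independently of $k$ as soon as $k$ is large enough.

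Next, I would exploit that $R$ is a subalgebra of the polynomial ring $\mathrm{Sym}(V\otimes V\otimes V)$, hence an integral domain, and that its $T\times T\times T$-grading is multiplicative: the product of an element of weight $(\lambda,\mu,\nu)$ and an element of weight $(\alpha,\beta,\gamma)$ lies in weight $(\lambda+\alpha,\mu+\beta,\nu+\gamma)$. By hypothesis, pick a nonzero $f\in R_{(\alpha,\beta,\gamma)}$. Multiplication by $f$ defines a $\mathbb{C}$-linear map from $R_{(\lambda,\mu,\nu)}$ into $R_{(\lambda+\alpha,\mu+\beta,\nu+\gamma)}$ which is injective because $R$ has no zero divisors. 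Consequently $g(\lambda+\alpha,\mu+\beta,\nu+\gamma)\geq g(\lambda,\mu,\nu)$; the symmetric argument, multiplying instead by a nonzero element of $R_{(\lambda,\mu,\nu)}$, delivers the other inequality and yields the asserted bound by the maximum.

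The one subtle point is confirming that the combinatorial Kronecker coefficient really coincides with the multiplicity appearing in the triple Cauchy decomposition and that this multiplicity stabilizes in $k$; both facts are standard consequences of Schur--Weyl duality together with the usual stabilization of polynomial $GL$-characters. Beyond that, I do not foresee a serious obstacle: the argument is a direct adaptation of Brion's classical proof of the analogous semi-group property for Littlewood--Richardson coefficients, and it is essentially the approach pursued in Manivel's reference \cite{Ma}.
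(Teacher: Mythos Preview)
Your proof plan is correct. Note, however, that the paper does not supply its own proof of this proposition: it is simply quoted from Manivel \cite{Ma} and used as a black box. Your sketch via the $U^3$-invariant subalgebra of $\mathrm{Sym}(V\otimes V\otimes V)$, exploiting that this ring is a domain with a multiplicative $T^3$-grading, is exactly the argument in \cite{Ma}, so there is nothing to compare beyond observing that you have reproduced the cited reference rather than anything in the present paper.
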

\noindent This property leads us to the next definition.

\begin{definition}
	Let $k$ be a positive integer, and let $m:=\lfloor\log_2 k\rfloor$. We define a \textit{telescopic partition}  of $ \binom{k+1}{2}$ to be a partition $\alpha\vdash \binom{k+1}{2}$ in which $\alpha$ can be  written as $$\alpha=(\beta_s+\sum_{i=s}^{t}(\sharp_{2^i}+\alpha_i))^{(a,b)},$$ where $s$ and $t$ are integers with $0\leq s\leq t\leq m-1$, the pair $(a,b)$ is $(k,2^{t+1})$-good, $\beta_s\in \mathrm{kron}(\rho_{2^s})$ and for every integer $s\leq i\leq t$, $\alpha_i\in \mathrm{kron}(\rho_{2^i})$. We use the notation $\mathrm{Par}_{T}(k)$ to indicate the set of all telescopic partitions of $\binom{k+1}{2}$.
\end{definition}

\begin{theorem}\label{saxl}
	
	Let $k$ be a positive integer. 
	\begin{itemize}
		\item [a)] For every $\alpha\in \mathrm{Par}_{T}(k)$, the Kronecker coefficient  $g(\rho_k,\rho_k,\alpha)$ is non-zero.
		\item [b)] Assume that $t$ is a positive integer such that $t$ is even or $t\geq 2k+1$. If $\alpha \in \mathrm{Kron}(\rho_k)$, then either
		\begin{itemize}
			\item [i)] $\alpha$ is a $t$-core partition, or 
			\item [ii)] there exists a partition $\nu \in\mathrm{Kron}(\rho_k)\backslash\{\alpha\}$ such that $\alpha$ and 
			$\nu$ have the same $t$-core.
		\end{itemize}
	\end{itemize}
	
\end{theorem}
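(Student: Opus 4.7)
The plan is to treat the two parts of the theorem separately; they use very different ideas.

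For part (b), the strategy is to reduce to Proposition \ref{block}. I would first verify that $\rho_k$ is itself a $t$-core under the stated hypotheses. A direct computation gives the hook length at cell $(i,j)$ of the staircase as $h(i,j) = 2(k-i-j)+3$, which is always odd and at most $2k-1$. Hence $\rho_k$ has no hook of length $t$ whenever $t$ is even or $t \geq 2k+1$, i.e., $\rho_k$ is a $t$-core. Applying Proposition \ref{block} with $\xi = \beta = \rho_k$ and the given $\alpha \in \mathrm{Kron}(\rho_k)$ then yields the required dichotomy immediately.

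For part (a), the natural approach is to build up $g(\rho_k, \rho_k, \alpha)$ step by step via the semi-group property (Proposition \ref{sum}). The plan proceeds in three stages. First, I would prove an auxiliary lemma: for each $i$ with $s \leq i \leq t$ and any $\mu, \nu \in \mathrm{Kron}(\rho_{2^i})$, one has $\sharp_{2^i} + \mu + \nu \in \mathrm{Kron}(\rho_{2^{i+1}})$. The key device is the componentwise decomposition $\rho_{2^{i+1}} = \rho_{2^i} + (\sharp_{2^i} \oplus \rho_{2^i})$, where $\sharp_{2^i} \oplus \rho_{2^i}$ denotes the partition obtained by stacking the square $\sharp_{2^i}$ on top of $\rho_{2^i}$. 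Since the conjugate $(\sharp_{2^i} \oplus \rho_{2^i})'$ equals $\sharp_{2^i} + \rho_{2^i}$ componentwise, one can apply semi-group to the cleaner conjugate form (using known Kronecker positivity for the rectangle $\sharp_{2^i}$ together with $g(\rho_{2^i}, \rho_{2^i}, \alpha_i) > 0$) and then transfer the conclusion back via the conjugation symmetry of Kronecker coefficients. Second, iterating the lemma for $i = s, s+1, \ldots, t$ starting from $\beta_s \in \mathrm{Kron}(\rho_{2^s})$ telescopes to give $\sigma := \beta_s + \sum_{i=s}^{t}(\sharp_{2^i} + \alpha_i) \in \mathrm{Kron}(\rho_{2^{t+1}})$. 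Third, lift from $\rho_{2^{t+1}}$ to $\rho_k$ using the $(k, 2^{t+1})$-good pair $(a,b)$: exploit the componentwise decomposition $\rho_k = \rho_{2^{t+1}} + \sum_{i=2^{t+1}+1}^{k}(1^i)$ together with the identity $g((1^i), (1^i), (i)) = 1$, and separate the contributions of $i \in I$ from $i \in I^c$ using the conjugation symmetry of $\rho_k$ to arrange the trailing $b$ ones prescribed by the $(a,b)$-operation.

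The main obstacle is the auxiliary lemma in the first stage. The partition $\sharp_{2^i} \oplus \rho_{2^i}$ is neither a standard rectangle nor a standard staircase, and the conjugate identity $(\sharp_{2^i} \oplus \rho_{2^i})' = \sharp_{2^i} + \rho_{2^i}$ is the only clean structural fact available for analyzing its Kronecker square. A secondary difficulty in the third stage is that the plain semi-group argument using $\rho_k = \rho_{2^{t+1}} + \sum (1^i)$ naturally yields only $\sigma^{(a+b, 0)}$-type contributions, because the third slot of $g((1^i), (1^i), \cdot)$ is forced to be $(i)$; producing the $b$ trailing ones for $i \in I^c$ therefore requires a more refined argument combining semi-group with conjugation symmetry or with an alternative decomposition of $\rho_k$.
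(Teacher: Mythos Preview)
Your proposal is correct and follows essentially the same route as the paper. Part (b) is identical to the paper's argument (the paper packages the hook-length computation as Lemma~\ref{treangle} and then applies Proposition~\ref{block}). For part (a), your three stages correspond exactly to the paper's structure: your auxiliary lemma is the paper's Lemma~\ref{main}(b) in the case $k=2m$, proved precisely via the conjugation trick you describe (the paper writes $\rho_{2m}=\rho_m+(\rho_m+\sharp_m)'$ and applies semi-group twice); your iteration is the induction in the paper's proof of Theorem~\ref{saxl}(a); and your lifting stage is Lemma~\ref{main}(a), where the ``secondary difficulty'' you flag is handled one step at a time by the identity $\rho_n=\rho_{n-1}+(1^n)$ together with $g(\rho_n,\rho_n,\alpha^{(0,n)})=g(\rho_n,\rho_n,(\alpha')^{(n,0)})$ via self-conjugacy of $\rho_n$. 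The obstacles you raise are thus not genuine gaps but exactly the points where the paper invokes conjugation symmetry, just as you anticipate; the only external input you should make explicit is $\sharp_{2^i}\in\mathrm{Kron}(\sharp_{2^i})$, which the paper takes from \cite[Corollary~3.2]{Bs}.
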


\begin{corollary}\label{telescope}
	Let $k$ be a positive integer and let $m:=\lfloor\log_2 k\rfloor$. Then for every $(a,b)\in G(k,2^m)$, the Kronecker coefficient $g(\rho_k,\rho_k,(\rho_1+\sum_{i=0}^{m-1}(\sharp_{2^i}+\rho_{2^i}))^{(a,b)})$ is non-zero.
\end{corollary}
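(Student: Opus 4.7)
The plan is to deduce the corollary directly from Theorem~\ref{saxl}(a) by exhibiting the partition in the statement as a specific telescopic partition. Concretely, I will take the parameters $s := 0$ and $t := m - 1$ in the defining formula of $\mathrm{Par}_{T}(k)$, and set $\beta_0 := \rho_1$ together with $\alpha_i := \rho_{2^i}$ for every $0 \leq i \leq m - 1$. With these choices, the generic telescopic formula $(\beta_s + \sum_{i=s}^{t}(\sharp_{2^i} + \alpha_i))^{(a, b)}$ specialises exactly to $(\rho_1 + \sum_{i=0}^{m-1}(\sharp_{2^i} + \rho_{2^i}))^{(a, b)}$, which is the partition appearing in the statement.

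Once the parameters are fixed, I will run through the conditions of the telescopic definition one by one. The inequality $0 \leq s \leq t \leq m - 1$ is immediate whenever $k \geq 2$, and the small case $k = 1$ reduces to the trivial identity $g(\rho_1, \rho_1, \rho_1) = 1$. The requirement that $(a, b)$ be $(k, 2^{t+1})$-good coincides with the hypothesis $(a, b) \in G(k, 2^m)$ since $2^{t+1} = 2^m$, and the membership $\beta_0 \in \mathrm{Kron}(\rho_1)$ is immediate because $\mathfrak{S}_1$ has a single irreducible character. Once the remaining diagonal membership condition (below) is in hand, $(\rho_1 + \sum_{i=0}^{m-1}(\sharp_{2^i} + \rho_{2^i}))^{(a, b)}$ lies in $\mathrm{Par}_{T}(k)$, and a direct application of Theorem~\ref{saxl}(a) yields the non-vanishing of the Kronecker coefficient.

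The only nontrivial step, and hence the main obstacle, is verifying that $\alpha_i = \rho_{2^i}$ really belongs to $\mathrm{Kron}(\rho_{2^i})$ for each $0 \leq i \leq m - 1$; equivalently, that $g(\rho_{2^i}, \rho_{2^i}, \rho_{2^i}) > 0$. The cases $i = 0$ and $i = 1$ are settled by direct inspection (for $i = 1$, the decomposition $[(2,1)]^2 = [(3)] + [(2,1)] + [(1,1,1)]$ already shows $\rho_2 \in \mathrm{Kron}(\rho_2)$), but a uniform argument valid for all $i$ is required. I expect this to follow either from an inductive application of the semi-group property (Proposition~\ref{sum}), rebuilding $\rho_{2^i}$ from $\rho_{2^{i-1}}$ together with auxiliary partitions whose Kronecker diagonal is already known to contain the relevant summand, or from an auxiliary lemma earlier in the paper that exploits the self-conjugacy of the staircase to guarantee the required diagonal membership for all $i$ at once.
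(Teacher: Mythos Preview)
Your overall plan is exactly the paper's: specialise the telescopic template with $s=0$, $t=m-1$, $\beta_0=\rho_1$, $\alpha_i=\rho_{2^i}$, and then invoke Theorem~\ref{saxl}(a). The one nontrivial ingredient you isolate, namely $\rho_{2^i}\in\mathrm{Kron}(\rho_{2^i})$, is precisely what the paper singles out as well.

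However, the paper does \emph{not} obtain this via either of the routes you propose. It simply cites \cite[Corollary~3.2]{Bs} (Bessenrodt--Behns), the very same external result already invoked inside the proof of Theorem~\ref{saxl}(a) to get $\sharp_{2^i}\in\mathrm{Kron}(\sharp_{2^i})$; that corollary gives $g(\lambda,\lambda,\lambda)>0$ for self-conjugate $\lambda$ of the relevant Durfee type, which covers both the square and the staircase. Your first speculated route (bootstrapping $\rho_{2^{i+1}}$ from $\rho_{2^i}$ via Lemma~\ref{main}(b)) does not obviously go through, since one would need to write $\rho_{2^{i+1}}$ as $\lambda+\mu+\nu$ with $\lambda,\mu\in\mathrm{Kron}(\rho_{2^i})$ and $\nu\in\mathrm{Kron}(\sharp_{2^i})$, and the na\"ive choice $\rho_{2^i}+\rho_{2^i}+\sharp_{2^i}$ has only $2^i$ parts rather than $2^{i+1}$. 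Your second speculated route (an internal self-conjugacy lemma) does not exist in the paper. So replace your final paragraph with a direct appeal to \cite[Corollary~3.2]{Bs} and the proof is complete and identical to the paper's.
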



\section{preliminaries}

 We let $\mathfrak{S}_n$ denote the symmetric group on $n$ letters. We will briefly review some definitions on the representation theory of symmetric groups \cite{GA}.
 A \textit{partition} $\lambda$ of a positive integer $n$, denoted $\lambda\vdash n$, is defined to be a weakly decreasing sequence $\lambda=(\lambda_1, \dots, \lambda_l)$ of positive integers such that the sum  $|\lambda|=\sum_{i=1}^{l}\lambda_i$ is equal to $n$. The \textit{length}  $l(\lambda)$ of a partition $\lambda\vdash n$ is the number of parts of $\lambda$.
For convenience, the notation $(\lambda_1^{a_1},\dots,\lambda_r^{a_r})\vdash n$ is used to denote the partition where $\lambda_i$'s are the distinct parts that occur with multiplicities $a_i$'s.
We identify a partition $\lambda$ with its associated \textit{Young diagram}, that is, the set of cells $\{(i,j)\in \mathbb{N}^2_{>0}\,|\,1\leq i\leq l(\lambda),\, j\leq \lambda_i\}$.
The \textit{conjugate} or \textit{transpose} $\lambda^\prime$ of $\lambda$ is defined to be equal to the partition obtained from $\lambda$ by reflecting its Young diagram through the $45^\circ$ diagonal. A \textit{Hook partition} of $n$ is a partition of the form $\lambda=(n-m,1^m)$, where $m\leq n$ is a non-negative integer. Given partitions $\lambda=(\lambda_1, \lambda_2,\dots,\lambda_s)\vdash m$ and $\mu=( \mu_1, \mu_2,\dots, \mu_t) \vdash n$, the partition $\lambda+\mu \vdash m+n$ is defined as follows: Without loss of generality, we can assume that $s\geq t$. The $i$-th part of $\lambda+\mu$ is given by 
$$(\lambda+\mu)_i:=\begin{cases} \lambda_i+\mu_i  & \text{if }1\leq i\leq t\\  \lambda_i    & \text{if } t+1\leq i\leq s
\end{cases}.$$

It is well known that both the conjugacy classes of $\mathfrak{S}_n$ and the irreducible characters of $\mathfrak{S}_n$ are indexed by partitions of $n$. Let $C$ be a conjugacy class of $\mathfrak{S}_n$. As all elements in a conjugacy class $C$ indexed by a partition $\alpha\vdash n$ have the same cycle structure, for every element $\pi\in C$, we say that $\pi$ is an element indexed by $\alpha$.
 The character corresponding to a partition $\lambda$ of $n$ is denoted by $[\lambda]$. As $[\lambda]$ is a class function of $\mathfrak{S}_n$, we use $[\lambda](\alpha)$ to indicate the value of $[\lambda]$ on the conjugacy class indexed by $\alpha \vdash n$. The relation between $[\lambda]$ and $[\lambda^\prime]$ is reflected in the following lemma.

\begin{lemma}\cite[2.1.8]{GA}\label{sgn}
	Let $\lambda\vdash n$. Then $[\lambda^\prime]=[(1^n)][\lambda]$.
\end{lemma}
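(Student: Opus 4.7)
Since $[(1^n)]$ is the sign character of $\mathfrak{S}_n$, assigning $\mathrm{sgn}(\pi)$ to each $\pi$, the lemma is equivalent to the class-function identity $[\lambda'](\pi)=\mathrm{sgn}(\pi)\,[\lambda](\pi)$ for every $\pi\in\mathfrak{S}_n$. My plan is to derive this from the standard interaction between the Frobenius characteristic map and the fundamental involution $\omega$ on the ring $\Lambda$ of symmetric functions.

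First I would invoke the Frobenius identity expressing the Schur function in the power-sum basis,
$$s_\lambda=\sum_{\mu\vdash n}z_\mu^{-1}\,[\lambda](\mu)\,p_\mu,$$
where $p_\mu$ is the power-sum symmetric function indexed by $\mu$ and $z_\mu$ is the centralizer size of a permutation of cycle type $\mu$. Then I would call on two standard properties of $\omega\colon\Lambda\to\Lambda$: (i) $\omega(s_\lambda)=s_{\lambda'}$, and (ii) $\omega(p_k)=(-1)^{k-1}p_k$ for each $k\geq 1$. Setting $\epsilon(\mu):=(-1)^{|\mu|-l(\mu)}$, which is precisely the sign of any permutation of cycle type $\mu$, property (ii) extends multiplicatively to $\omega(p_\mu)=\epsilon(\mu)\,p_\mu$.

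Applying $\omega$ to the Frobenius identity and using (i) gives
$$s_{\lambda'}=\sum_{\mu\vdash n}z_\mu^{-1}\,\epsilon(\mu)\,[\lambda](\mu)\,p_\mu,$$
and comparing this against the Frobenius identity for $s_{\lambda'}$, together with the linear independence of the $p_\mu$, yields $[\lambda'](\mu)=\epsilon(\mu)\,[\lambda](\mu)=[(1^n)](\mu)\,[\lambda](\mu)$, as required. The main obstacle is justifying (i); property (ii) is a direct computation from the expansion of $p_k$ in the $h$-basis, but (i) typically relies on either the Jacobi--Trudi determinantal identity combined with $\omega(h_n)=e_n$, or on a combinatorial argument via semistandard tableaux and the RSK correspondence. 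Once (i) is granted, the remainder of the argument is linear algebra in $\Lambda$.
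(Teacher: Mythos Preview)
Your argument is correct: the Frobenius characteristic map carries the identity $\omega(s_\lambda)=s_{\lambda'}$ to the character identity $[\lambda']=\mathrm{sgn}\cdot[\lambda]$, and your derivation via the power-sum expansion is the standard way to make this transfer explicit. The only caveat you already flag---that $\omega(s_\lambda)=s_{\lambda'}$ requires either Jacobi--Trudi or an equivalent input---is accurate, and once that is granted the rest is routine.

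However, there is nothing to compare against in the paper itself: the lemma is stated with a citation to James--Kerber~[2.1.8] and no proof is given. In James--Kerber the result is obtained directly from the representation-theoretic construction of $[\lambda]$ (via Young symmetrizers / Specht modules), where tensoring with the sign representation visibly interchanges the roles of row and column stabilizers and hence sends the module for $\lambda$ to that for $\lambda'$. Your symmetric-function route is equally standard but different in flavour: it trades the explicit module construction for the machinery of $\Lambda$ and the involution $\omega$. Each approach has its cost---yours needs Jacobi--Trudi (or RSK) to pin down $\omega(s_\lambda)$, while the James--Kerber argument needs the Specht-module realisation of $[\lambda]$---but both are short once the relevant background is in place.
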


We now present a simple observation that we will frequently use in section 3.

\begin{lemma}\label{prim}
	Let $\lambda,\mu,\nu\vdash n$. 
	\begin{itemize}
		\item [a)]  $g(\lambda^\prime,\mu^\prime, \nu)=g(\lambda,\mu, \nu)$.
		\item[b)] If $\lambda$ is self-conjugate, then $g(\lambda,\mu^\prime,\nu)=g(\lambda,\mu,\nu)$.
	\end{itemize}
\end{lemma}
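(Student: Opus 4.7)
The plan is to reduce both parts to the scalar-product definition of Kronecker coefficients and exploit the identity supplied by Lemma \ref{sgn}, namely $[\lambda^\prime]=[(1^n)][\lambda]$, together with the fact that the sign character $[(1^n)]$ has order two (so $[(1^n)]\cdot[(1^n)]=[(n)]$, the trivial character, as pointwise products of class functions).

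For part (a), I would write out
$$g(\lambda^\prime,\mu^\prime,\nu)=\langle [\lambda^\prime][\mu^\prime],[\nu]\rangle=\langle [(1^n)][\lambda][(1^n)][\mu],[\nu]\rangle,$$
then use commutativity of pointwise multiplication of class functions and $[(1^n)]^2=[(n)]$ to collapse the two sign twists. What remains is $\langle [\lambda][\mu],[\nu]\rangle=g(\lambda,\mu,\nu)$. This is essentially a one-line character computation; the only thing to justify carefully is that the product in $[\lambda][\mu]=\sum_\nu g(\lambda,\mu,\nu)[\nu]$ referenced in the introduction is exactly the pointwise product of class functions, which is already the definition given via the scalar product formula in the introduction.

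For part (b), I would deduce it directly from (a). Replacing $\mu$ by $\mu^\prime$ in (a) yields $g(\lambda^\prime,\mu,\nu)=g(\lambda,\mu^\prime,\nu)$, and the self-conjugacy hypothesis $\lambda=\lambda^\prime$ then gives $g(\lambda,\mu,\nu)=g(\lambda,\mu^\prime,\nu)$. Alternatively, one can argue directly: $[\lambda][\mu^\prime]=[\lambda][(1^n)][\mu]=[\lambda^\prime][\mu]=[\lambda][\mu]$ when $\lambda=\lambda^\prime$, and then take inner products with $[\nu]$.

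There is essentially no obstacle here, since everything follows from Lemma \ref{sgn} and a short formal manipulation of class-function products. The only small point to be careful about is the symmetry of the Kronecker coefficients in all three arguments, which was already noted in the introduction and is implicit when one freely moves the sign characters between the two factors of the product.
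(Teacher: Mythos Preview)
Your proposal is correct and is precisely the intended argument: the paper's proof reads in its entirety ``It immediately follows from Lemma \ref{sgn},'' and what you have written is exactly the one-line character computation that unpacks this. There is no difference in approach.
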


\begin{proof}
	It immediately follows from Lemma \ref{sgn}.
\end{proof}
Let $m$ and $n$ be positive integers.
 Given partitions $\mu \vdash m$ and $\lambda\vdash m+n$, the partition $\mu$ is a \textit{subpartition} of $\lambda$, written $\mu \subseteq \lambda$, if $l(\mu)\leq l(\lambda)$ and $\mu_i\leq \lambda_i$, for all $1\leq i\leq l(\mu)$.
 The \textit{skew diagram} $\lambda/\mu$ is defined as the set of cells in $\lambda$ but not in $\mu$.
 We use the notation $\mathrm{ht}(\lambda/\mu)$ for the number of rows of $\lambda/\mu$ minus one. \textit{A ribbon} is a skew diagram $\lambda/\mu$ that does not contain a $2\times2$ square.
  A ribbon can be partitioned into \textit{connected components}, where cells $x$ and $y$ lie in the same component if there is a sequence of cells $x=z_1, z_2,\dots,z_k=y$ in the ribbon in which $z_i$ and $z_{i+1}$ share an edge. A connected component of a ribbon with $t$ cells is called a \textit{$t$-hook.} 
  
  \begin{theorem}(Murnaghan-Nakayama Rule).\cite[Theorem 4.10.2]{Sa} 
  	Given positive integers $m$ and $n$, let $\delta \in \mathfrak{S}_{m+n}$ be an $m$-cycle and let $\pi$ be a permutation of the remaining $n$ elements. Then for every $\lambda\vdash m+n$,
  	$$[\lambda](\pi\delta)=\sum(-1)^{\mathrm{ht}(\lambda/\mu)}[\mu](\pi),$$ where the sum is over all $\mu \vdash n$ such that $\mu\subset \lambda$ and $\lambda/\mu$ is an $m$-hook.
  	
  \end{theorem}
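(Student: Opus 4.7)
The plan is to reduce the statement to a Pieri-type identity for Schur functions via the Frobenius characteristic map. Let $\mathrm{ch}$ denote the characteristic map that sends the irreducible character $[\lambda]$ to the Schur function $s_\lambda$; under $\mathrm{ch}$ the indicator class function of cycle type $\rho$ is sent to $z_\rho^{-1}p_\rho$, where $p_\rho$ is the associated power sum symmetric function. With the standard Hall inner product on symmetric functions one has $[\lambda](\rho)=\langle s_\lambda,p_\rho\rangle$. Because $\pi$ and $\delta$ act on disjoint sets of letters, the cycle type of $\pi\delta$ is the multiset union of $(m)$ with the cycle type of $\pi$, so $p_{\pi\delta}=p_m\cdot p_\pi$. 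Therefore
$$[\lambda](\pi\delta)=\langle s_\lambda,p_m p_\pi\rangle=\langle T_m s_\lambda,p_\pi\rangle,$$
where $T_m$ is the linear operator on symmetric functions adjoint to multiplication by $p_m$. The theorem will follow immediately from the identity
$$T_m s_\lambda=\sum_{\mu}(-1)^{\mathrm{ht}(\lambda/\mu)}s_\mu,$$
where the sum runs over all $\mu\vdash n$ with $\mu\subset\lambda$ and $\lambda/\mu$ an $m$-hook, because pairing the right-hand side with $p_\pi$ reproduces exactly the claimed sum of $[\mu](\pi)$ with the correct signs.

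The core combinatorial step is therefore the dual Pieri rule $p_m\cdot s_\nu=\sum_\lambda(-1)^{\mathrm{ht}(\lambda/\nu)}s_\lambda$, summed over $\lambda\supset\nu$ for which $\lambda/\nu$ is an $m$-hook; taking adjoints under the Hall inner product then yields the expansion of $T_m s_\lambda$ above. To establish this identity I would start from the Jacobi--Trudi determinant $s_\nu=\det(h_{\nu_i-i+j})$ together with the generating-function consequence $p_m\cdot h_r=\sum_{k=0}^{m-1}(-1)^k e_k\, h_{r+m-k}$, and then expand $p_m s_\nu$ by multilinearity of the determinant row by row. Cancelling determinants that contain two equal rows leaves precisely those summands in which an $m$-ribbon has been attached to $\nu$, and the sign $(-1)^{\mathrm{ht}(\lambda/\nu)}$ materialises from the row permutation needed to put the modified matrix back into Jacobi--Trudi form. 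The main obstacle is the sign bookkeeping in this last step: once the signs are matched to the number of rows occupied by the attached ribbon, everything else reduces to the definition of $\mathrm{ch}$ and a routine adjointness computation in the algebra of symmetric functions.
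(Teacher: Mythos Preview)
The paper does not supply its own proof of this theorem; it simply quotes the Murnaghan--Nakayama rule from Sagan's textbook as a background tool, so there is no argument in the paper to compare against. Your overall strategy---pass to symmetric functions via the Frobenius characteristic, rewrite $[\lambda](\pi\delta)=\langle s_\lambda,p_m p_\pi\rangle$, and reduce everything to the ribbon expansion of $p_m s_\nu$---is exactly one of the standard routes to the result and is sound.

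There is, however, a concrete error in the combinatorial core. The identity you propose,
\[
p_m\cdot h_r=\sum_{k=0}^{m-1}(-1)^k e_k\, h_{r+m-k},
\]
is false already for $m=1$ (it would force $h_1 h_r=h_{r+1}$), so the ``multiply $p_m$ into one row of the Jacobi--Trudi determinant and expand'' step cannot proceed as written. The clean way to carry out this part is to use the bialternant expression $s_\nu=a_{\nu+\delta}/a_\delta$ in finitely many variables: from $p_m a_\alpha=\sum_{j} a_{\alpha+m\epsilon_j}$ one gets $p_m s_\nu=\sum_j a_{\nu+\delta+m\epsilon_j}/a_\delta$, and sorting the exponent vector back into strictly decreasing order produces exactly the ribbons together with the sign $(-1)^{\mathrm{ht}(\lambda/\nu)}$. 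Once you replace the incorrect $h$--$e$ identity by this bialternant computation (or by any correct derivation of the ribbon rule for $p_m s_\nu$), the rest of your argument---adjointness under the Hall inner product and the translation back to character values---goes through unchanged.
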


Let $\lambda\vdash n$. Given a positive integer $t$, we say that $\lambda$ is \textit{$t$-singular} (resp. \textit{$t$-regular}), if at least one  part of $\lambda$ (resp. no part of $\lambda$) is divisible by $t$. 
We denote by $H^\lambda_{i,j}$ the \textit{$(i,j)$-hook} of $\lambda$, which consists of the $(i,j)$-cell, called the \textit{corner} of the $(i,j)$-hook, along with any other cells directly below or to the right of the corner. 
The \textit{hook length} of a cell $u=(i,j)\in \lambda$ is defined as the number of cells in $H^\lambda_{i,j}$.
The \textit{$t$-core of $\lambda$}, denoted by $\mathrm{Cor}_t(\lambda)$, is a diagram obtained by successive removals of $t$-hooks from $\lambda$.
The \textit{$t$-weight} $w_t(\lambda)$ of $\lambda$ is defined as $w_t(\lambda):=\frac{|\lambda|-|\mathrm{Cor}_t(\lambda)|}{t}$.
 The partition $\lambda$ is called \textit{$t$-core}, if it does not contain any cell of hook length $t$. We denote by $T(\lambda)$, the set of all positive integers $t$ such that $\lambda$ is a $t$-core partition. 

\begin{lemma}\label{core}
Let $\lambda\vdash n$. Then for every positive integer $t$, the following conditions are equivalent: 
\begin{itemize}
	\item[a)] $\lambda$ is a $t$-core partition. 
	\item[b)] $[\lambda](\alpha)=0$, for every $t$-singular partition $\alpha \vdash n$.
\end{itemize}
\end{lemma}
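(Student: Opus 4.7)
The plan is to use the Murnaghan-Nakayama rule in both directions: (a)$\Rightarrow$(b) proceeds by peeling off a part of $\alpha$ that is divisible by $t$, while (b)$\Rightarrow$(a) is handled by a contrapositive argument that exploits the linear independence of the distinct irreducible characters of $\mathfrak{S}_{n-t}$.

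For (a)$\Rightarrow$(b), fix a $t$-core $\lambda$ and a $t$-singular $\alpha \vdash n$; let $s = kt$ be a part of $\alpha$. Writing a permutation of cycle type $\alpha$ as $\pi\delta$, with $\delta$ an $s$-cycle and $\pi$ supported on the remaining $n-s$ letters of cycle type $\beta := \alpha \setminus (s)$, the Murnaghan-Nakayama rule gives
\[
[\lambda](\alpha) \;=\; \sum_{\mu} (-1)^{\mathrm{ht}(\lambda/\mu)}\,[\mu](\beta),
\]
summed over $\mu \vdash n-s$ with $\lambda/\mu$ an $s$-ribbon. I will invoke the classical fact that a $t$-core admits no hook of any length divisible by $t$: indeed, the last $t$ cells of any hypothetical $s$-ribbon of $\lambda$ can be stripped off as a $t$-ribbon of $\lambda$, contradicting $t$-coreness. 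Hence no such $\mu$ exists, the sum is empty, and $[\lambda](\alpha)=0$.

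For (b)$\Rightarrow$(a), I argue by contrapositive. Assume $\lambda$ is not a $t$-core, so $\lambda$ contains a cell of hook length $t$ and therefore at least one $\mu \vdash n-t$ gives a $t$-ribbon $\lambda/\mu$. Let $M$ denote the non-empty set of all such $\mu$, and form the virtual character
\[
\chi \;:=\; \sum_{\mu \in M} (-1)^{\mathrm{ht}(\lambda/\mu)}\,[\mu]
\]
of $\mathfrak{S}_{n-t}$. Because the $[\mu]$'s are distinct irreducible characters they are linearly independent, so $\chi$ is a nonzero virtual character, and $\chi(\beta)\neq 0$ for some $\beta \vdash n-t$. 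By Murnaghan-Nakayama, $\chi(\beta) = [\lambda](\alpha)$ where $\alpha \vdash n$ is the rearrangement of $(t)$ together with the parts of $\beta$; since $\alpha$ has $t$ as a part, it is $t$-singular, contradicting (b).

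The main obstacle is the auxiliary fact invoked in the first direction — that a $t$-core partition admits no hook of length divisible by $t$. This is standard (and can be justified by a short combinatorial argument on border strips, or via the $t$-abacus model), but it is the only non-formal ingredient; once it is granted, both implications reduce to a direct application of Murnaghan-Nakayama together with the linear independence of the irreducible characters of $\mathfrak{S}_{n-t}$.
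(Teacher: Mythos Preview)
Your argument is correct. The forward implication is essentially the same as the paper's: both reduce to the fact that a $t$-core has no rim hook of any length divisible by $t$, which the paper simply cites from \cite[2.7.40]{GA} while you sketch a border-strip justification.

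For (b)$\Rightarrow$(a), however, you take a genuinely different route. The paper constructs an explicit $t$-singular witness, namely $\alpha=(t^{w_t(\lambda)},1^{n-t\cdot w_t(\lambda)})$, and invokes \cite[Corollary~2.7.33]{GA} to identify $[\lambda](\alpha)$ with a nonzero multiple of the degree of $[\mathrm{Cor}_t(\lambda)]$. Your approach instead strips a single $t$-ribbon, forms the virtual character $\chi=\sum_{\mu\in M}(-1)^{\mathrm{ht}(\lambda/\mu)}[\mu]$ on $\mathfrak{S}_{n-t}$, and appeals to linear independence of the $[\mu]$ to produce \emph{some} $\beta$ with $\chi(\beta)\neq 0$. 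Your argument is more self-contained (it avoids the auxiliary James--Kerber formula) but nonconstructive; the paper's is shorter once one grants the cited result and has the advantage of naming the witness $\alpha$ explicitly.
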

\begin{proof}
Let $\lambda$ be a $t$-core partition and let $\alpha=(\alpha_1, \alpha_2,\dots,\alpha_h)\vdash n$ be $t$-singular. Then for some integer $1\leq i\leq h$, the part $\alpha_i$ is divisible by $t$. Using \cite[2.7.40]{GA}, $\lambda$ is an $\alpha_i$-core partition. Hence, $\lambda$ does not have any cell of hook length $\alpha_i$. Therefore, it follows from the  Murnaghan-Nakayama rule that $[\lambda](\alpha)=0$. Conversely, suppose that $[\lambda](\alpha)=0$, for every $t$-singular partition $\alpha \vdash n$, but $\lambda$ is not a $t$-core partition. Then $w_t(\lambda)\geq 1$.
  If $\alpha:=(t^{w_t(\lambda)}, 1^{n-t.w_t(\lambda)})\vdash n$, then it follows from \cite[Corollary 2.7.33]{GA} that $[\lambda](\alpha)=c[\mathrm{Cor}_t(\lambda)]((1^{n-t.w_t(\lambda)}))$, for some integer $c\neq 0$. In particular, $[\lambda](\alpha)\neq 0$ which is a contradiction as $\alpha$ is $t$-singular. 
\end{proof}

\begin{lemma}\label{treangle}
Let $k$ be a positive integer. Then $T(\rho_k)$ is precisely the set of all positive integers $t$ such that either $t$ is even or $t\geq 2k+1$.
\end{lemma}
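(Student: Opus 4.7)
The strategy is direct: compute the set of hook lengths appearing in $\rho_k$ explicitly and then apply the definition of $t$-core recalled in Section 2, which says $\lambda$ is a $t$-core exactly when no cell of $\lambda$ has hook length $t$.

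First I would compute the hook length at a general cell $(i,j)\in\rho_k$. Since $(\rho_k)_i=k+1-i$, the arm length at $(i,j)$ is $k+1-i-j$. For the leg, the rows $i'>i$ with $(\rho_k)_{i'}\ge j$ are exactly those with $i<i'\le k+1-j$, and because $(i,j)\in\rho_k$ forces $i+j\le k+1$, the leg length also equals $k+1-i-j$ (reflecting the self-conjugacy of $\rho_k$). Hence
$$h(i,j)=2(k+1-i-j)+1,$$
which is always odd.

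Next, setting $s:=i+j$, the sum $s$ takes every integer value in $\{2,3,\dots,k+1\}$ as $(i,j)$ runs over the cells of $\rho_k$ (for instance via the cells $(1,s-1)$), and $h(i,j)=2k+3-2s$. Hence the underlying set of hook lengths of $\rho_k$ is exactly
$$\{1,3,5,\dots,2k-1\}.$$
By the definition of $t$-core, $\rho_k$ is a $t$-core iff $t\notin\{1,3,\dots,2k-1\}$, i.e.\ iff $t$ is even or $t\ge 2k+1$, which is exactly the claim.

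No serious obstacle is expected here; the whole argument is a short hook-length calculation. The only point requiring care is ensuring that every odd integer from $1$ to $2k-1$ is actually realized as a hook length, so that no odd $t\le 2k-1$ is inadvertently admitted into $T(\rho_k)$; this is immediate from the cells $(1,1),(1,2),\dots,(1,k)$ whose hook lengths are $2k-1,2k-3,\dots,1$ respectively.
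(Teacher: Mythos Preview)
Your proposal is correct and follows essentially the same approach as the paper: both rest on the observation that the hook lengths of $\rho_k$ are precisely the odd integers $1,3,\dots,2k-1$, and then invoke the definition of $t$-core. Your write-up is somewhat more explicit than the paper's one-line proof, in that you verify the formula $h(i,j)=2(k+1-i-j)+1$ and check that every odd value up to $2k-1$ is actually attained; the paper simply asserts the key fact without this verification.
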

\begin{proof}
	The result immediately follows from this fact that the hook length of a cell $u=(i,j)\in \rho_k$ is an odd integer $h\leq 2k-1$. 
\end{proof}

Let $t\geq 2$ be an integer. A \textit{$t$-regular} (resp. \textit{$t$-singular}) element of the symmetric group $\mathfrak{S}_n$ is an element indexed by a $t$-regular (resp. $t$-singular) partition $\lambda\vdash n$. We denote by $\mathfrak{S}_n^{(t^\prime)}$ (resp. $\mathfrak{S}_n^{(t)}$), the set of $t$-regular (resp. $t$-singular) elements of $\mathfrak{S}_n$.  We can consider the restriction to $\mathfrak{S}_n^{(t^\prime)}$ of the scalar product on ordinary characters of $\mathfrak{S}_n$.
We set
$$\langle [\alpha],[\beta]\rangle_{\mathfrak{S}_n^{(t^\prime)}}:=\frac{1}{n!}\sum_{\pi\in \mathfrak{S}_n^{(t^\prime)}}[\alpha](\pi)[\beta](\pi),$$
\noindent for every irreducible characters $[\alpha]$ and $[\beta]$ of $\mathfrak{S}_n$.

The concept of generalized blocks was introduce by K$\ddot{u}$ lshammer, Olsson and Robinson in \cite{Ku}.
The irreducible characters $[\alpha]$ and $[\beta]$ are said to be \textit{directly $t$-linked} if $\langle [\alpha],[\beta]\rangle_{\mathfrak{S}_n^{(t^\prime)}}\neq 0$. The direct $t$-linking defines an equivalence relation (called $t$-linking) on the set of all irreducible characters of $\mathfrak{S}_n$ with equivalence classes that are called the \textit{linked $t$-blocks} of $\mathfrak{S}_n$. 
A combinatorial $t$-block of the symmetric group $\mathfrak{S}_n$ is the set of all irreducible characters indexed by partitions with the same $t$-core. When $t$ is a prime, combinatorial $t$-blocks are equal to $t$-blocks coming from modular representation theory (Nakayama conjecture). 
The concepts of linked and combinatorial $t$-blocks of $\mathfrak{S}_n$ are identical \cite[Theorem 5.13]{Ku}. Therefore, In abbreviation, we will use the term "$t$-block" instead of the terms "linked $t$-block" and "combinatorial $t$-block".


\section{Proof of our main results}

In this section, we wish to prove our main results. We start with a useful method to find constituents of $[\rho_k]^2$, which is inspired by semi-group property.
\begin{lemma}\label{main}
	Let $k$ and $m$ be positive integers with $k\geq m$.
	\begin{itemize}
		\item [a)] 
		For every $\alpha \in \{\lambda^{(a,b)}|\,\lambda \in \mathrm{Kron}(\rho_m),\,(a,b)\in G(k,m)\}$, the Kronecker coefficient $g(\rho_k,\rho_k, \alpha)$ is non-zero.
		\item [b)] If $k=2m$ or $2m+1$, 
		then for every $\alpha \in \{\lambda+\mu+\nu|\,\lambda,\mu \in \mathrm{Kron}(\rho_m),\,\nu \in \mathrm{Kron}(\sharp_{k-m})\}$,
		the Kronecker coefficient $g(\rho_k,\rho_k, \alpha)$ is non-zero.	
	\end{itemize}
\end{lemma}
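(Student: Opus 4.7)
The strategy throughout is to apply the semi-group property (Proposition \ref{sum}) to the $+$-decomposition $\rho_k = \rho_m + \eta$, where $\eta := \rho_k - \rho_m$ (coordinate-wise) is the unique partition solving this equation; explicitly, $\eta = ((k-m)^{m+1}, k-m-1, \ldots, 1)$, and iterating $\rho_j = \rho_{j-1} + (1^j)$ produces the telescope $\eta = (1^{m+1}) + (1^{m+2}) + \cdots + (1^k)$. Under the hypothesis $k \in \{2m, 2m+1\}$ of part (b), a direct Young-diagram computation establishes the further identity $\eta' = \rho_m + \sharp_{k-m}$ as a $+$-sum of partitions.

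For part (b) this last identity closes the argument in two steps. Proposition \ref{sum} applied to $\eta' = \rho_m + \sharp_{k-m}$ yields $g(\eta', \eta', \mu + \nu) > 0$ for every $\mu \in \mathrm{Kron}(\rho_m)$ and $\nu \in \mathrm{Kron}(\sharp_{k-m})$, and Lemma \ref{prim}(a) transfers this to $g(\eta, \eta, \mu + \nu) > 0$. A second application of Proposition \ref{sum} to $\rho_k = \rho_m + \eta$ then delivers $g(\rho_k, \rho_k, \lambda + \mu + \nu) > 0$, as required.

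For part (a) the same setup reduces the claim to exhibiting, for each $(a, b) \in G(k, m)$, a partition $\gamma \in \mathrm{Kron}(\eta)$ with $\lambda + \gamma = \lambda^{(a, b)}$. When $\lambda = (|\lambda|)$ is a single row, one takes $\gamma = (a, 1^b)$ (a hook), and the remaining task is to verify that this hook lies in $\mathrm{Kron}(\eta)$. For general $\lambda \in \mathrm{Kron}(\rho_m)$, the identity $(\lambda^{(a, b)})' = (\lambda')^{(b, a)}$, together with $g(\rho_k, \rho_k, \nu) = g(\rho_k, \rho_k, \nu')$ (a consequence of Lemma \ref{prim}(b) applied to the self-conjugate $\rho_k$), lets one reduce to the case of a single-row (or single-column) $\lambda$ by swapping the roles of $\lambda$ and $\lambda'$ and of $(a, b)$ and $(b, a)$. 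The hook-membership $(a, 1^b) \in \mathrm{Kron}(\eta)$ is approached by splitting the telescope as $\eta = \eta_I + \eta_J$ where $\eta_I := \sum_{i \in I} (1^i)$ corresponds to the subset $I \subseteq \{m+1, \ldots, k\}$ realising $a = \sum_{i \in I} i$, then using $\mathrm{Kron}((1^n)) = \{(n)\}$ and a further conjugation via Lemma \ref{prim}.

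The principal obstacle is this last step: the naive semi-group combination yields $(a) + (1^b) = (a+1, 1^{b-1})$, so reaching the desired $(a, 1^b)$ requires either the alternative decomposition $(a, 1^b) = (a-1) + (1^{b+1})$ (which constrains the available self-conjugate summands of $\eta$) or an appeal to Blasiak's combinatorial formula for hook constituents in $[\rho_k]^2$.
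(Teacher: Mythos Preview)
Your argument for part (b) is essentially the paper's: the identity $\eta'=\rho_m+\sharp_{k-m}$ is exactly the statement $\rho_k=\rho_m+(\rho_m+\sharp_{k-m})'$ used there, and the two semi-group applications with the conjugation step in between match line for line.

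Part (a), however, has a genuine gap. Your plan is to write $\rho_k=\rho_m+\eta$ once and then find a single $\gamma\in\mathrm{Kron}(\eta)$ with $\lambda+\gamma=\lambda^{(a,b)}$. But for a general $\lambda=(\lambda_1,\dots,\lambda_h)$ with $h\ge 2$, the coordinate-wise difference $\lambda^{(a,b)}-\lambda$ equals $(a,0^{h-1},1^b)$, which is \emph{not} a partition; no such $\gamma$ exists. Your proposed ``reduction to a single-row $\lambda$'' via $(\lambda^{(a,b)})'=(\lambda')^{(b,a)}$ does not accomplish this: it only exchanges $(\lambda,a,b)$ for $(\lambda',b,a)$, and if $\lambda$ has more than one row and more than one column, so does $\lambda'$. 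Even in the single-row case you correctly flag an off-by-one obstruction to showing $(a,1^b)\in\mathrm{Kron}(\eta)$, which you do not resolve.

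The paper avoids both problems by \emph{not} bundling $\eta$ into one piece. It applies the semi-group property one step at a time along the telescope $\rho_j=\rho_{j-1}+(1^j)$: given $\mu\in\mathrm{Kron}(\rho_{j-1})$, the identities $\mu^{(j,0)}=\mu+(j)$ and $(\mu^{(0,j)})'=\mu'+(j)$ (together with $\mu'\in\mathrm{Kron}(\rho_{j-1})$ and self-conjugacy of $\rho_j$) give $\mu^{(j,0)},\mu^{(0,j)}\in\mathrm{Kron}(\rho_j)$. Iterating from $j=m+1$ to $j=k$, choosing $(j,0)$ for $j\in I$ and $(0,j)$ for $j\notin I$, yields $\lambda^{(a,b)}\in\mathrm{Kron}(\rho_k)$. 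The crucial point you missed is that the conjugation must be \emph{interleaved} with the additions; postponing it to a single application at the end is exactly what forces the non-partition difference above.
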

\begin{proof}
a) 
Let $n\geq 2$ be a positive integer. First note that  the Kronecker coefficient $g((1^n),(1^n),(n))=1\neq 0$. It follows from semi-group property that for every $\alpha\in \mathrm{Kron}(\rho_{n-1})$, 
	$$g(\rho_{n}, \rho_n,\alpha^{(n,0)})=g(\rho_{n-1}+(1^n),\rho_{n-1}+(1^n),\alpha+(n)),$$
	\noindent and 
$$
g(\rho_{n}, \rho_n,\alpha^{(0,n)})=g(\rho_{n}, \rho_n,(\alpha^{(0,n)})^\prime)=g(\rho_{n-1}+(1^n), \rho_{n-1}+(1^n),\alpha^\prime+(n)),
$$
are non-zero. Thus, for every 
$\alpha\in \mathrm{Kron}(\rho_{n-1})$, we have that 
\begin{equation}
\alpha^{(n,0)},\alpha^{(0,n)}\in \mathrm{Kron}(\rho_{n}).
\end{equation}
Let $(a,b)$ be a $(k,m)$-good pair. There exists a subset $I\subseteq \{m+1,m+2,\dots,k\}$ such that $a=\sum_{i\in I}i$. We set $J:=\{m+1,m+2,\dots,k\}\backslash I$. For every $i\in I\cup J$, we define 
$$i^*:=\begin{cases} (i,0)  & \text{if }i\in I\\  (0,i)    & \text{if } i\in J 
\end{cases}.$$
\noindent 	Now let $\lambda\in \mathrm{Kron}(\rho_m)$. For the case $k=m$, the result follows from the definition of $\lambda^{(a,b)}$. thus, we can assume that $k>m$. It is easily checked that 
$$\lambda^{(a,b)}=(\dots((\lambda^{(m+1)^*})^{(m+2)^*})^{(m+3)^*}\dots)^{k^*}.$$
\noindent Therefore, the result follows from (1).\\

b) Let $\lambda,\mu\in \mathrm{Kron}(\rho_m)$ and $\nu \in \mathrm{Kron}(\sharp_{k-m}).$ Then using semi-group property, we deduce that the Kronecker coefficient 
$$g((\rho_m+\sharp_{k-m})^\prime,(\rho_m+\sharp_{k-m})^\prime,\mu+\nu)=g(\rho_m+\sharp_{k-m},\rho_m+\sharp_{k-m},\mu+\nu)$$
\noindent is non-zero. Hence, it again follows by semi-group property that the Kronecker coefficient
$$g(\rho_k, \rho_k,\lambda+\mu+\nu)=g(\rho_m+(\rho_m+\sharp_{k-m})^\prime,\rho_m+(\rho_m+\sharp_{k-m})^\prime,\lambda+(\mu+\nu))$$
\noindent is non-zero. Therefore, $\alpha:=\lambda+\mu+\nu\in \mathrm{Kron}(\rho_k)$.
\end{proof}

\noindent \textit{Proof of Proposition \ref{block}.}
Suppose that $B$ is the $t$-block of the symmetric group $\mathfrak{S}_{n}$ 
containing the irreducible character $[\alpha]$. 
If $|B|=1$, then as $B$ is a combinatorial $t$-block, we deduce that $\alpha$ is a $t$-core partition.
Thus, we can assume that $|B|\geq 2$.
Using Lemma \ref{core},
we get $$\sum_{\pi\in\mathfrak{S}^{(t)}_{n}}[\xi](\pi)[\beta](\pi)[\alpha](\pi)=0.$$ Thus, 
$$g(\xi,\beta,\alpha)=\langle[\xi][\beta], [\alpha]\rangle=\langle[\xi][\beta], [\alpha]\rangle_{\mathfrak{S}^{(t^\prime)}_{n}}.$$
\noindent Therefore, as $B$ is a linked $t$-block containing $[\alpha]$, we have that  
$$g(\xi,\beta,\alpha)=\sum_{\lambda\vdash n}g(\xi,\beta,\lambda)\langle[\lambda],[\alpha]\rangle_{\mathfrak{S}^{(t^\prime)}_{n}}=\sum_{[\lambda]\in B}g(\xi,\beta,\lambda)\langle [\lambda],[\alpha]\rangle_{\mathfrak{S}^{(t^\prime)}_{n}}.$$
\noindent Since $|B|\geq 2$ the partition $\alpha$ is not a $t$-core partition and so, it follows from Lemma \ref{core} that 
$$1-\langle [\alpha],[\alpha]\rangle_{\mathfrak{S}^{(t^\prime)}_{n}}=\langle [\alpha],[\alpha]\rangle-\langle [\alpha],[\alpha]\rangle_{\mathfrak{S}^{(t^\prime)}_{n}}$$
\noindent is non-zero. Hence, 
$$\sum_{[\lambda]\in B\backslash \{[\alpha]\}}g(\xi,\beta,\lambda)\langle [\lambda],[\alpha]\rangle_{\mathfrak{S}^{(t^\prime)}_{n}}=g(\xi,\beta, \alpha)(1-\langle [\alpha],[\alpha]\rangle_{\mathfrak{S}^{(t^\prime)}_{n}})$$
\noindent is non-zero. Therefore, the result follows from this fact that $B$ is a combinatorial $t$-block. \qed\\

\noindent \textit{Proof of Corollary \ref{hook}.}
If $\alpha$ is a hook partition, then it follows from \cite[Theorem 2.1]{Re} that $g(\alpha,\alpha,\alpha)$ or $g(\alpha,\alpha,\alpha^\prime)$ is non-zero. Therefore, without loss of generality, we can assume that $\alpha$ is not a hook partition. At first, let $\alpha\neq \alpha^\prime$. It is easily checked that $g(\alpha,\alpha,(n))=1$ and $g(\alpha,\alpha,(1^n))=0$. Note that the $n$-block of the symmetric group $\mathfrak{S}_n$ is precisely the set of all irreducible characters indexed by hook partitions of $n$. Thus, applying Proposition \ref{block}, we deduce that there exists a hook partition $\beta\neq (n),(1^n)$ such that $g(\alpha,\alpha,\beta)\neq 0$. Now, let $\alpha=\alpha^\prime=(\alpha_1,\alpha_2,\dots,\alpha_h)$. Set $\alpha^*=(\alpha_2,\alpha_3,\dots,\alpha_h)^\prime$. Since $\alpha^*$ is not self-conjugate, there exists a hook partition $\beta^*\neq (n-\alpha_1),(1^{n-\alpha_1})$ such that $g(\alpha^*,\alpha^*,\beta^*)\neq 0$. Therefore as $g((1^{\alpha_1}),(1^{\alpha_1}),(\alpha_1))=1$, it follows from semi-group property that $g(\alpha,\alpha,\beta^*+(\alpha_1))\neq 0$. This completes the proof. \qed\\

\noindent \textit{Proof of Theorem \ref{saxl}.}
a) Let $m:=\lfloor\log_2 k\rfloor$. 
Given $\alpha\in \mathrm{Par}_T(k)$, there are positive integers $s$ and $t$ with $0\leq s\leq t\leq m-1$, the pair $(a,b)\in G(k,2^{t+1})$, the partition $\beta_s\in \mathrm{kron}(\rho_{2^s})$ and for every integer $s\leq i\leq t$, $\alpha_i\in \mathrm{kron}(\rho_{2^i})$ such that  $$\alpha=(\beta_s+\sum_{i=s}^{t}(\sharp_{2^i}+\alpha_i))^{(a,b)}.$$
 Note that it follows from \cite[Corollary 3.2]{Bs} that  $\sharp_{2^i}\in \mathrm{Kron}(\sharp_{2^i})$, for every positive integer $s\leq i\leq t$. For the case $m=0$, the result is clear. Thus, we can assume that $k\geq 2$. We first claim that $\lambda(t):=\beta_s+\sum_{i=s}^{t}(\sharp_{2^i}+\alpha_i)\in \mathrm{Kron}(\rho_{2^{t+1}})$. We prove it by induction on $t$. If $t=s$, then $\lambda(s)=\beta_s+\alpha_s+\sharp_{2^s}$, and so the result follows from Lemma \ref{main}(b). Now suppose that $t>s$. Given an integer $s\leq r<m-1$, we assume that the claim holds for the case $t=r$ and we try to prove it for $t=r+1$. By the induction hypothesis, $\lambda(r)\in \mathrm{Kron}(\rho_{2^{r+1}})$. Hence it follows from Lemma \ref{main}(b) that $\lambda(r+1)=\lambda(r)+\alpha_{r+1}+\sharp_{2^{r+1}}\in \mathrm{Kron}(\rho_{2^{r+2}})$. Therefore, as $\alpha=\lambda(t)^{(a,b)}$, the result follows from Lemma \ref{main}(a).

b) 
By Lemma \ref{treangle}, $t\in T(\rho_k)$. Therefore, the result follows from Proposition \ref{block}. \qed\\
 \begin{example}
	Let $k$ be a positive integer, and let $m:=\lfloor\log_2 k\rfloor$. Given a positive integer $1\leq i\leq m-1$, suppos that $\alpha_i:=((2^{i-1})^2,1^{2^{2i-1}-2^{i-1}})\vdash\binom{2^i+1}{2}$. Then applying \cite[Theorem 1.4]{Morotti}, $\alpha_i$ is of odd degree. Hence, it follows from \cite[Theorem 5.2]{BO} that $\alpha_i\in \mathrm{Kron}(\rho_{2^i})$. Therefore, using Theorem \ref{saxl} (a), we have that for every $(a,b)\in G(k,2^m)$, the Kronecker coefficient $g(\rho_k,\rho_k,(\rho_1+\sum_{i=0}^{m-1}(\sharp_{2^i}+\alpha_{i}))^{(a,b)})$ is non-zero.
\end{example}
We end this section with the proof of Corollary \ref{telescope}. 

\noindent\textit{ Proof of Corollary \ref{telescope}}.
 Applying  \cite[Corollary 3.2]{Bs}, we have that $\rho_{2^i} \in \mathrm{Kron}(\rho_{2^i})$, for every non-negative integer $i$. Therefore, the result follows from Theorem \ref{saxl}(a).\qed


\section*{Declaration of interests}
I declare that I have no known competing financial interests or personal relationships that could have appeared to influence the work reported in this note.

\section*{Data Availability Statements}
Data sharing is not applicable to this article, as no data sets were generated or analyzed during the current study.


\section*{Acknowledgements}
I would like to express my gratitude to Lucia Morotti for her valuable comments on $t$-core partitions. This research was supported in part
by a grant from School of Mathematics, Institute for Research in Fundamental Sciences (IPM).


\end{document}